\DeclareMathOperator{\Gal}{Gal}
\DeclareMathOperator{\Img}{Im}
\DeclareMathOperator{\Inf}{inf}
\DeclareMathOperator{\Ker}{Ker}
\DeclareMathOperator{\res}{res}
\DeclareMathOperator{\dd}{d}
\DeclareFontFamily{U}{wncy}{}
\DeclareFontShape{U}{wncy}{m}{n}{<->wncyr10}{}
\DeclareSymbolFont{mcy}{U}{wncy}{m}{n}
\DeclareMathSymbol{\Sha}{\mathord}{mcy}{"58}
\DeclareMathSymbol{\sha}{\mathord}{mcy}{"78}
\begin{document}

\newtheorem{thm}{Theorem}[section]
\newtheorem{cor}[thm]{Corollary}
\newtheorem{lem}[thm]{Lemma}
\newtheorem{prop}[thm]{Proposition}
\newtheorem{defin}[thm]{Definition}
\newtheorem{exam}[thm]{Example}
\newtheorem{examples}[thm]{Examples}
\newtheorem{rem}[thm]{Remark}
\newtheorem{case}{\sl Case}
\newtheorem{claim}{Claim}
\newtheorem{question}[thm]{Question}
\newtheorem{conj}[thm]{Conjecture}
\newtheorem*{notation}{Notation}
\swapnumbers
\newtheorem{rems}[thm]{Remarks}
\newtheorem*{acknowledgment}{Acknowledgment}
\newtheorem*{thmno}{Theorem}

\newtheorem{questions}[thm]{Questions}
\numberwithin{equation}{section}

\newcommand{\gr}{\mathrm{gr}}
\newcommand{\inv}{^{-1}}
\newcommand{\isom}{\cong}
\newcommand{\dbC}{\mathbb{C}}
\newcommand{\F}{\mathbb{F}}
\newcommand{\dbN}{\mathbb{N}}
\newcommand{\Q}{\mathbb{Q}}
\newcommand{\dbR}{\mathbb{R}}
\newcommand{\dbU}{\mathbb{U}}
\newcommand{\Z}{\mathbb{Z}}
\newcommand{\calG}{\mathcal{G}}
\newcommand{\K}{\mathbb{K}}
\newcommand{\bfH}{\mathbf{H}}
\newcommand{\bfA}{\mathbf{A}}
\newcommand{\bfT}{\mathbf{T}}
\newcommand{\bfLam}{\mathbf{\Lambda}}


\newcommand{\hac}{\hat c}
\newcommand{\hatheta}{\hat\theta}

\title[1-smooth pro-$p$ groups]{1-smooth pro-$p$ groups \\ and Bloch-Kato pro-$p$ groups}
\author{Claudio Quadrelli}
\address{Department of Mathematics and Applications, University of Milano Bicocca, 20125 Milan, Italy EU}
\email{claudio.quadrelli@unimib.it}
\date{\today}

\begin{abstract}
 Let $p$ be a prime.
 A pro-$p$ group $G$ is said to be 1-smooth if it can be endowed with a homomorphism of pro-$p$ groups $G\to1+p\Z_p$ satisfying a formal version of Hilbert 90.
By Kummer theory, maximal pro-$p$ Galois groups of fields containing a root of 1 of order $p$, together with the cyclotomic character, are 1-smooth.
We prove that a finitely generated $p$-adic analytic pro-$p$ group is 1-smooth if, and only if, it occurs as the maximal pro-$p$ Galois group of a field containing a root of 1 of order $p$.
This gives a positive answer to De~Clerq-Florence's ``Smoothness Conjecture'' --- which states that the bijectivity of the norm residue homorphism (i.e., the Bloch-Kato Conjecture) follows from 1-smoothness --- for the class of finitely generated $p$-adic analytic pro-$p$ groups.
\end{abstract}

\subjclass[2010]{Primary 12G05; Secondary 20E18, 20J06, 12F10}

\keywords{Galois cohomology, maximal pro-$p$ Galois groups, Bloch-Kato conjecture, cyclotomic character, $p$-adic analytic groups}

\maketitle

\section{Introduction}
\label{sec:intro}

For a field $\K$ let $\bar{\K}_s$ denote the separable closure of $K$, and $G_{\K}=\Gal(\bar {\K}_s/{\K})$ the absolute Galois
group of ${\K}$.
One of the main open questions in modern Galois theory is to describe absolute Galois groups of
fields among profinite groups.
The description of the maximal pro-$p$ Galois group $G_{\K}(p)$ --- i.e., the Galois group
of the maximal $p$-extension ${\K}(p)/{\K}$ --- among pro-$p$ groups, for a given prime number $p$,
is already a challenging task.
One of the oldest known obstructions for the realization of a pro-$p$ group as $G_{\K}(p)$ for some field ${\K}$ 
comes from the Artin-Schreier theorem (whose pro-$p$ version is due to E.~Becker, see \cite{becker}): the only 
non-trivial finite group which occurs as the absolute Galois group (and maximal pro-$p$ Galois group) of a field is the cyclic group
of order two.

The proof of the celebrated Bloch-Kato conjecture, by M.~Rost and V.~Voevodsky (with C.~Weibel's ``patch'', see \cite{rost,voev,weibel,weibel2,HW:book}),
provided a description of the Galois cohomology of absolute Galois groups of fields in terms of low degree cohomology.
In particular, the Norm Residue Theorem (also called the Rost-Voevodsky Theorem) implies that if $\K$ contains a root of 1 of order $p$, then $G_{\K}(p)$ is a {\sl Bloch-Kato pro-$p$ group}, i.e., the $\Z/p$-cohomology algebra of every closed subgroup of $G_{\K}(p)$ is a {\sl quadratic} algebra.
This led to the achievement of new obstructions for the realization of pro-$p$ groups as maximal pro-$p$ Galois groups
(see, e.g., \cite{em,cem,cq:bk,qw:cyc}).
For instance, one may recover the Artin-Schreier obstruction as consequence of the Bloch-Kato property (see, e.g., \cite[p.~796]{cq:bk}).

A pair $\calG=(G,\theta)$, consisting of a pro-$p$ group $G$ together with a morphism of pro-$p$ groups $\theta\colon G\to1+p\Z_p$, is called an {\sl oriented pro-$p$ group} (see \cite{qw:cyc}) --- here $1+p\Z_p$ denotes the multiplicative abelian pro-$p$ group $\{1+p\lambda\mid\lambda\in\Z_p\}$.
Given a field $\K$ containing a primitive $p$-th root of unity of 1, the maximal pro-$p$ Galois group of ${\K}$ may be considered naturally as an oriented pro-$p$ group $\calG_{\K}=(G_{\K}(p),\theta_{\K,p})$, where $\theta_{\K,p}\colon G_{\K}(p)\to1+p\Z_p$ is the {\sl cyclotomic character}, which describes the action of $G_{\K}(p)$ on the roots of $1$ of $p$-power order lying in $\bar \K(p)$ (see \cite[\S~4]{eq:kummer}).

The oriented pro-$p$ group $\calG_{\K}$ satisfies the following formal version of Hilbert 90.
Given an oriented pro-$p$ group $\calG=(G,\theta)$, let $\Z_p(\theta)$ denote the continuous $G$-module which is isomorphic to $\Z_p$ as an abelian pro-$p$ group, and endowed with the left $G$-action defined by $g.v=\theta(g)\cdot v$ for all $g\in G$ and $v\in\Z_p(\theta)$.
The oriented pro-$p$ group $\calG$ is said to be {\sl Kummerian} if the morphism 
\begin{equation}\label{eq:def kummer}
   H^1(G,\Z_p(\theta)/p^n\Z_p(\theta))\longrightarrow H^1(G,\Z_p(\theta)/p\Z_p(\theta)),
\end{equation}
induced by the epimorphism of $G$-modules $\Z_p(\theta)/p^n\Z_p(\theta)\to \Z_p(\theta)/p\Z_p(\theta)$, is surjective for every $n\geq1$; and moreover $\calG$ is said to be {\sl 1-smooth} if the oriented pro-$p$ group $\calG_H=(H,\theta\vert_H)$ is  Kummerian for every closed subgroup $H\subseteq G$.
By Kummer theory, the oriented pro-$p$ group $\calG_{\K}$ is 1-smooth (see \cite[Prop.~14.19]{dcf:lift} and \cite[Thm.~1.1]{qw:cyc}).

In the paper \cite{dcf:lift} --- motivated by the pursuit of an ``explicit'' proof
of the Bloch-Kato conjecture as an alternative to the proof by Voevodsky ---
 C.~De~Clerq and M.~Florence introduce the 1-smoothness property.
In particular, they formulate the ``Smoothness Conjecture'': namely, that it is possible to deduce the
surjectivity part of the Block-Kato conjecture (which is known to be the ``hard part'' of the conjecture) from the fact that the oriented pro-$p$ group $\calG_{\K}$ arising from a field $\K$ containing a root of 1 of order $p$, is 1-smooth: in other words, they conjecture that a 1-smooth oriented pro-$p$ group yields a {\sl weakly Bloch-Kato} pro-$p$ group (i.e., a pro-$p$ group whose $\Z/p$-cohomology satisfies the aforementioned surjectivity feature, see Definition~\ref{defin:BK}).
For example, one has that 1-smoothness implies the Artin-Schreier obstruction (see Example~\ref{ex:AS}).

Our goal is to prove that in the class of finitely generated {\sl $p$-adic analytic pro-$p$ groups}, 1-smoothness implies the Bloch-Kato property and the realizability as maximal pro-$p$ Galois group.

\begin{thm}\label{thm:intro}
Let $G$ be a finitely generated $p$-adic analytic pro-$p$ group.
The following are equivalent:
\begin{itemize}
 \item[(i)] $G$ may be completed into a 1-smooth oriented pro-$p$ pair $\calG=(G,\theta)$ (with $\Img(\theta)\subseteq 1+4\Z_2$, if $p=2$);
 \item[(ii)] $G$ is Bloch-Kato (and moreover $\alpha^2=0$ for every $\alpha\in H^1(G,\Z/2)$, if $p=2$).
 \item[(iii)] $G$ occurs as the maximal pro-$p$ Galois group of a field $\K$ containing a primitive $p$-th root of 1 (and also $\sqrt{-1}$, if $p=2$).
\end{itemize}
\end{thm}

\noindent
(Observe that if $\K$ is a field containing $\sqrt{-1}$, then it is well-known that $\Img(\theta_{\K,2})\subseteq1+4\Z_2$ and $\alpha^2=0$ for every $\alpha\in H^1(G_{\K}(2),\Z/2)$.)

Implication (i)$\Rightarrow$(ii) of Theorem~\ref{thm:intro} gives a positive answer to the Smoothness Conjecture for the class of finitely generated $p$-adic analytic pro-$p$ groups, as a Bloch-Kato pro-$p$ group is --- quite obviously --- also weakly Bloch-Kato.
Thus, Theorem~\ref{thm:intro} provides a concrete example of a class of pro-$p$ groups for which the (weak) Bloch-Kato property follows from 1-smoothness --- other examples are free pro-$p$ groups and Demushkin group.
(After the publication of this result, the Smoothness Conjecture has been proved for the class of {\sl right-angled Artin pro-$p$ groups} by I.~Snopce and P.~Zalesski\u{\i}, see \cite{sz:raags}.)

In fact, analytic pro-$p$ groups represent the ``upper bound'' of the class of Bloch-Kato pro-$p$ groups (i.e., Bloch-Kato pro-$p$ groups for which $H^2(G,\Z/p)$ is as large as possible), while the ``lower bound'' (i.e., $H^2(G,\Z/p)$ is as small as possible) consists of free pro-$p$ groups and Demushkin groups.
Thus, by Theorem~\ref{thm:intro}, for the two opposite ``pillars'' of the class of Bloch-Kato pro-$p$ groups, the Bloch-Kato property follows from 1-smoothness.

Moreover, the structure of torsion-free $p$-adic analytic Bloch-Kato pro-$p$ groups is extremely rigid, and all such pro-$p$ groups occur as maximal pro-$p$ Galois groups of fields (see, e.g., \cite[\S~3.1--3.2]{cmq:fast}).
By Theorem~\ref{thm:intro}, this rigidity in terms of structure follows also from 1-smoothness: this suggests that 1-smoothness is a very strong and restrictive condition.
We believe that a further investigation of 1-smoothness for pro-$p$ groups may lead to the discovery of new obstructions for the structure of maximal pro-$p$ Galois groups --- and absolute Galois pro-$p$ groups --- of fields (see, e.g., \cite{cq:nogal}).

Last, but not least, it is worth mentioning that the class of $p$-adic analytic pro-$p$ groups is an important class of groups to consider --- besides the Bloch-Kato property ---, for the role such groups play in the {\sl $p$-adic Langlands program} (see, e.g., \cite{langlands}).

{\small \begin{rem}\label{rem:intro}\rm
The research carried out in this manuscript was originally made public in the preprint \cite{cq:1smooth}, published on arXiv in April 2019 (in particular, Theorem~\ref{thm:intro} was \cite[Thm.~1.3]{cq:1smooth}), and submitted to a refereed journal. Subsequently, we decided to change strategy, and to split the original paper: this manuscript is one of the two resulting pieces.
In the meanwhile, the research on 1-smooth oriented pro-$p$ groups went on and it lead to other results, such as the aforementioned work by Snopce and Zalesski\u{i} \cite{sz:raags}, and \cite{cq:nogal,BQW:raags}.
In particular, the results contained in \cite{cq:1smooth} have been quoted in the subsequent works \cite{cq:nogal,cq:galfeat,st:fratini}.
        \end{rem}}


\section{Oriented pro-$p$ groups and Kummerianity}
\label{sec:kummerian}

We work in the category of pro-$p$ groups; by an abuse of notation, ``subgroup'' will always mean ``closed subgroup'', and sets of generators of pro-$p$ groups, and presentations, are to be intended in the topological sense.
Therefore, sets of generators of pro-$p$ groups, and presentations, are to be intended in the topological sense.
Given a pro-$p$ group $G$, we denote the closed commutator subgroup of $G$ (i.e., the closed normal subgroup generated by commutators $[g,h]=g^{-1}h^{-1}gh$, $g,h\in G$) by $G'$; the {\sl Frattini subgroup} of $G$ is denoted by $\Phi(G)$ (cf. \cite[Prop.~1.13]{ddsms}).

Recall that $1+p\Z_p=\{1+p\lambda\:\mid\:\lambda\in\Z_p\}$ is a multiplicative abelian pro-$p$ group.
In particular, if $p$ is odd then $1+p\Z_p\simeq \Z_p$ (the latter being considered as an additive pro-$p$ group), and $1+p\Z_p$ is torsion-free; while if $p=2$ then 
\begin{equation}\label{eq:12Z2}
 1+2\Z_p=\{\pm1\}\times(1+4\Z_2)=(\Z/2)\oplus\Z_2
\end{equation}
(the latter being considered as an additive pro-$2$ group).

Following \cite{qw:cyc}, we call a pair $\calG=(G,\theta)$, consisting of a pro-$p$ group $G$ together with a morphism of pro-$p$ groups $\theta\colon G\to1+p\Z_p$, an {\sl oriented pro-$p$ group}, and the morphism $\theta$ is called an {\sl orientation} of $G$. 
(In \cite{efrat:small,eq:kummer}, an oriented pro-$p$ group is called a ``cyclotomic pro-$p$ pair'' --- for the motivation of the name ``orientation'', see the footnote at the end of p.~1885 in \cite{qw:cyc}.)
An orientation $\theta\colon G\to1+p\Z_p$ is said to be {\sl torsion-free} if the group $\Img(\theta)$ is torsion-free  (cf. \cite[\S~2]{eq:kummer}) --- namely, if $p=2$ then by \eqref{eq:12Z2} we require that $\Img(\theta)\subseteq1+4\Z_2$.

An oriented pro-$p$ group $\calG=(G,\theta)$ has a distinguished continuous pro-$p$ (left) $G$-module $\Z_p(\theta)$,
which is equal to the additive group $\Z_p$, and it is endowed with left $G$-action given by
\[
 g\cdot z=\theta(g)\cdot z, \quad\text{for all }g\in G,z\in\Z_p(\theta).
\]
The $G$-module $\Z_p(\theta)/p$ is a trivial $G$-module isomorphic to $\Z/p$, as $\theta(g)\equiv 1\bmod p$ for all $g\in G$.
Similarly, if $p=2$ and $\theta$ is a torsion-free orientation, then $\Z_2(\theta)/4$ is a trivial $G$-module isomorphic to $\Z/4$, as $\theta(g)\equiv 1\bmod 4$ for all $g\in G$.

A morphism of oriented pro-$p$ groups $\calG_1\to\calG_2$, with $\calG_i=(G_i,\theta_i)$ for $i=1,2$,
is a homomorphism of pro-$p$ groups $\phi\colon G_1\to G_2$ such that $\theta_1=\theta_2\circ\phi$ (cf. \cite[\S~3, p.~1888]{qw:cyc}).
In the continuation, we will use the following constructions of oriented pro-$p$ groups.
Let $\calG=(G,\theta)$ be an oriented pro-$p$ group.
\begin{itemize}
 \item[(a)] If $N$ is a normal subgroup of $G$ contained in $\Ker(\theta)$, one has the oriented pro-$p$ group
 \begin{equation}\label{eq:or quotient}
  \calG/N=(G/N,\bar\theta),\end{equation}
 where $\bar\theta\colon G/N\to1+p\Z_p$ is the orientation such that $\bar\theta\circ\pi=\theta$, with $\pi\colon G\to G/N$ the canonical projection.
 \item[(b)] If $A$ is an abelian pro-$p$ group (written multiplicatively), one has the oriented pro-$p$ pair
 \begin{equation}\label{eq:or semidirect}
  A\rtimes\calG=(A\rtimes G,\tilde\theta),
 \end{equation}
with action given by $gag\inv=a^{\theta(g)}$ for every $g\in G$, $a\in A$,
 where the orientation $\tilde\theta\colon A\rtimes G\to1+p\Z_p$ is the composition of the canonical projection $A\rtimes G\to G$ with $\theta$ (this construction was introduced by I.~Efrat in \cite[\S~3]{efrat:small}).
\end{itemize}

\begin{defin}\label{defin:thetabelian}\rm
 An oriented pro-$p$ group $\calG=(G,\theta)$ is said to be {\sl $\theta$-abelian} if $\calG\simeq A\rtimes \calG/\Ker(\theta)$ for some free abelian pro-$p$ group $A$.
\end{defin}

An oriented pro-$p$ group $\calG=(G,\theta)$ has a distinguished subgroup: the subgroup
\begin{equation}\label{eq:KG}
 K(\calG)=\left\langle\:ghg^{-1}h^{-\theta(g)}\:\mid\:g\in G,\:h\in\Ker(\theta)\:\right\rangle
\end{equation}
(cf. \cite[\S~3]{eq:kummer}).
The subgroup $K(\calG)$ is normal in $G$, and moreover one has 
\begin{equation}\label{eq:KG prop}
\Phi(G)\supseteq K(\calG)\qquad\text{and}\qquad \Ker(\theta)\supseteq K(\calG)\supseteq\Ker(\theta)',
\end{equation}
so that $\Ker(\theta)/K(\calG)$ is an abelian pro-$p$ group.
Observe that for every $g\in G$ and $h\in\Ker(\theta)$ one has $ghg^{-1}\equiv h^{\theta(g)}$ modulo $K(\calG)$, and hence 
\begin{equation}\label{eq:G mod KG}
\calG/K(\calG)\simeq \Ker(\theta)/K(\calG)\rtimes\calG/\Ker(\theta)
\end{equation}
in the sense of \eqref{eq:or semidirect}.
Moreover, if $\calG=(G,\theta)$ is a $\theta$-abelian oriented pro-$p$ group, then $K(\calG)=\{1\}$.

The following result gives a group-theoretic characterization of finitely generated Kummerian oriented pro-$p$ groups (cf. \cite[Thm.~5.6 and Thm.~7.1]{eq:kummer}).

\begin{thm}\label{thm:kummer}
Let $\calG=(G,\theta)$ be an oriented pro-$p$ group, with $G$ finitely generated and $\theta\colon G\to1+p\Z_p$ a torsion-free orientation.
The following are equivalent.
\begin{itemize}
 \item[(i)] $\calG$ is Kummerian.
 \item[(ii)] $\Ker(\theta)/K(\calG)$ is a free abelian pro-$p$ group.
 \item[(iii)] $\calG/K(\calG)=(G/K(\calG),\bar\theta)$ is a $\bar\theta$-abelian oriented pro-$p$ group.
\end{itemize} 
\end{thm}

In particular, by \eqref{eq:G mod KG} and Theorem~\ref{thm:kummer}, a finitely generated oriented pro-$p$ group $\calG=(G,\theta)$, with $\theta$ a torsion-free orientation and $K(\calG)=\{1\}$, is Kummerian if, and only if, $\calG$ is $\theta$-abelian.

\begin{rem}\label{rem:abs torfree}\rm
 If $\calG=(G,\mathbf{1})$ is an oriented pro-$p$ group with $\mathbf{1}\colon G\to1+p\Z_p$ the orientation constantly equal to 1, then $K(\calG)=G'$.
 By Theorem~\ref{thm:kummer}, the oriented pro-$p$ group $\calG$ is Kummerian if, and only if, the abelianization $G/G'=\Ker(\mathbf{1})/K(\calG)$ of $G$ is a free abelian pro-$p$ group.
 In particular, if $\calG$ is also 1-smooth, then the abelianization of every finitely generated subgroup of $G$ is a free abelian pro-$p$ group.
\end{rem}

\begin{exam}\label{ex:kummer}\rm
\begin{itemize}
 \item[(a)] Let $G$ be a free pro-$p$ group. Then the oriented pro-$p$ group $\calG=(G,\theta)$ is 1-smooth for any orientation $\theta$ (cf. \cite[\S~2.2]{qw:cyc}).
 \item[(b)] Let $G$ be a Demushkin group (cf., e.g., \cite[Def.~3.9.9]{nsw:cohn}). Then there exists one --- and only one --- orientation $\theta\colon G\to1+p\Z_p$ which completes $G$ into a 1-smooth oriented pro-$p$ group $\calG=(G,\theta)$ (cf. \cite[Thm.~4]{labute:demushkin} and \cite[Cor.~5.7]{qw:cyc}).
  \end{itemize}
\end{exam}

From the following example (cf. \cite[Ex.~3.5]{eq:kummer}), one may recover the Artin-Schreier obstruction as a consequence of 1-smoothness.

 \begin{exam}\label{ex:AS}\rm
For $p$ odd, let $G$ be a finite $p$ group, and let $\calG=(G,\theta)$ be an oriented pro-$p$ group.
Then $\theta\equiv\mathbf{1}$, as $1+p\Z_p$ is torsion-free, and thus $\Ker(\theta)=G$ and $K(\calG)=\{1\}$
Hence $\calG$ is not Kummerian.

Similarly, for $p=2$ let $G$ a group of order 4, and let $\calG=(G,\theta)$ be an oriented pro-$2$ group.
By \eqref{eq:12Z2} $\Ker(\theta)\neq\{1\}$, while $K(\calG)=\{1\}$ (cf. \cite[Ex.~3.5--(4)--(5)]{eq:kummer}).
Hence $\calG$ is not Kummerian.
By contrast, the oriented pro-$2$ group $\calG=(G,\theta)$ with $G\simeq\Z/2$ and $\Img(\theta)=\{\pm1\}$ is Kummerian (and thus 1-smooth).
 \end{exam}

\begin{rem}\label{rem:defin kummer}\rm
 In the original definition given in \cite[Def.~3.4]{eq:kummer}, an oriented pro-$p$ group $\calG=(G,\theta)$ is said to be  Kummerian if the quotient $\Ker(\theta)/K(\calG)$ is torsion-free.
 By Theorem~\ref{thm:kummer} this original definition and the ``cohomological'' definition given in the Introduction --- i.e., the morphism \eqref{eq:def kummer} is surjective for every $n\geq1$ ---, which we use throughout the paper, are equivalent if $G$ is finitely generated.
 In \cite[Thm.~1.2]{cq:detect1cyc} it is shown that these two definitions of Kummerianity are equivalent also in the non-finitely generated case.
 
 Finally, note that in \cite{qw:cyc}, the orientation $\theta$ of a 1-smooth oriented pro-$p$ group $\calG=(G,\theta)$ is said to be {\sl 1-cyclotomic}.
\end{rem}


\section{Bloch-Kato pro-$p$ groups and the Smoothness conjecture}
\label{sec:BK}

Here all graded algebras $\bfA_\bullet=\bigoplus_{n\in\Z} A_n$ over a field $\F$ are assumed to be locally finite-dimensional with $A_n=0$ for $n<0$ and $A_0=\F$.
A graded algebra $\bfA_\bullet$ is called a {\sl quadratic algebra} if it is 1-generated --- i.e., every element is a combination of products of elements of degree 1 ---, and its relations are generated by homogeneous relations of degree 2 (cf. \cite[Ch.~1, \S~2]{pp:quad}).
In other words, one has an isomorphism of graded algebras $\bfT_\bullet(A_1)/I\overset{\sim}{\to}\bfA_\bullet$, where $\bfT_\bullet(A_1)=\bigoplus_{n\geq0}A_1^{\otimes n}$ is the tensor $\F$-algebra generated by $A_1$, and $I$ is a two-sided ideal of $\bfT_\bullet(A_1)$ generated as a two-sided ideal by a subset of $A_1^{\otimes}$.

\begin{exam}\rm
 Let $V$ be a finite-dimensional vector space over $\Z/p$.
 \begin{itemize}
  \item[(a)] The tensor $\Z/p$-algebra $\bfT_\bullet(V)$ is quadratic.
  \item[(b)] The exterior algebra $\bfLam_\bullet(V)$ is quadratic, as $\bfT_\bullet(V)/I\simeq\bfLam_\bullet(V)$, with $I$ the two-sided ideal generated by $\{v\otimes v\:\mid\:v\in V\}\subseteq V^{\otimes2}$.
 \end{itemize}
\end{exam}

\begin{rem}\label{rem:wedgecomm}\rm
 If $\bfA_\bullet=\bigoplus_{n\geq0}A_n$ is a quadratic algebra such that $a^2=0$ for every $a\in A_1$, then one has an epimorphism of quadratic algebras $\bfLam_\bullet(A_1)\twoheadrightarrow\bfA_\bullet$.
\end{rem}

\begin{defin}\label{defin:BK}
\rm Let $G$ be a pro-$p$ group, and let $n\geq1$.
Cohomology classes in the image of the natural cup-product
\[
 H^1(G,\Z/p)\times\ldots\times H^1(G,\Z/p)\overset{\cup}{\longrightarrow} H^n(G,\Z/p)
\]
are called {\sl symbols} (relative to $\Z/p$).
\begin{itemize}
 \item[(i)] If for every open subgroup $U\subseteq G$ every element $\alpha\in H^n(U,\Z/p)$, for every $n\geq1$,
can be written as 
\[
 \alpha=\mathrm{cor}_{V_1,U}^n(\alpha_1)+\ldots+\mathrm{cor}^n_{V_r,U}(\alpha_r),\quad r\geq1,
\]
where $\alpha_i\in H^n(V_i,\Z/p)$ is a symbol and $$\mathrm{cor}_{V_i,U}^n\colon H^n(V_i,\Z/p)\longrightarrow H^n(U,\Z/p)$$
is the {\sl corestriction map} (cf. \cite[Ch.~I, \S~5]{nsw:cohn}), for some open subgroups $V_i\subseteq U$,
then $G$ is called a {\sl weakly Bloch-Kato pro-$p$ group} (cf. \cite[Def.~14.23]{dcf:lift}).
 \item[(ii)] If for every subgroup $H\subseteq G$, the $\Z/p$-cohomology algebra 
 \[\bfH^\bullet(H,\Z/p):=\coprod_{n\geq0}H^n(H,\Z/p),\]
endowed with the cup-product, is a quadratic algebra over $\Z/p$,
then $G$ is called a {\sl Bloch-Kato pro-$p$ group} (cf. \cite{cq:bk}).
\end{itemize}
\end{defin}

Clearly, a Bloch-Kato pro-$p$ group is also weakly Bloch-Kato.

 \begin{examples}\label{ex:BK}\rm
 \begin{itemize}
 \item[(a)] A free pro-$p$ group $G$ is Bloch-Kato, as $H^n(G,\Z/p)=0$ for $n\geq2$,
 and also every subgroup $H\subseteq G$ is a free pro-$p$ group (cf. \cite[Ch.~I, \S~4.2, Cor.~2--3]{serre:galc}).
 \item[(b)] A Demushkin group is Bloch-Kato (cf. \cite[Thm.~6.8]{qw:cyc}).
 In particular, every open subgroup of $G$ is again a Demushkin group (cf. \cite[Thm.~3.9.15]{nsw:cohn}), while every closed non-open subgroup of $G$ is a free pro-$p$ group (cf. \cite[Ch.~I, \S~4.5, Ex.~5--(b)]{serre:galc})
  \end{itemize}
 \end{examples}

Let $\K$ be a field containing a primitive $p$-th root of 1.
By the Norm Residue Theorem, the $\Z/p$-cohomology algebra $\bfH^\bullet(G_{\K},\Z/p)$ of the absolute Galois group $G_{\K}$ is quadratic.
By the Hochschild-Serre exact sequence associated to the short exact sequence of profinite groups
 \[
  \xymatrix{ \{1\}\ar[r] & \Gal(\bar \K_s/\K(p))\ar[r] & G_{\K}\ar[r] & G_{\K}(p)\ar[r] & \{1\} },
 \]
one has an isomorphism of graded $\Z/p$ algebras $\bfH^\bullet(G_{\K}(p),\Z/p)\simeq \bfH^\bullet(G_{\K},\Z/p)$ (cf., e.g., \cite[\S~2]{cq:bk}), so that also $\bfH^\bullet(G_{\K}(p),\Z/p)$ is quadratic.
Thus, $G_{\K}(p)$ is a Bloch-Kato pro-$p$ group.

The following is the pro-$p$ version of the Smoothness Conjecture formulated by C.~De~Clerq and M.~Florence
(cf. \cite[Conj.~14.25]{dcf:lift}).

\begin{conj}\label{conj:weak bk}
Let $\calG=(G,\theta)$ be a 1-smooth oriented pro-$p$ group, with $\theta$ a torsion-free orientation.
Then $G$ is a weakly Bloch-Kato pro-$p$ group.
\end{conj}

A positive answer to the Smoothness Conjecture would provide a new proof of the ``1-generation half'' of the Bloch-Kato conjecture (cf. \cite[\S~1.1]{dcf:lift}), alternative to the proof by Rost and Voevodsky.
Indeed, by Milnor $K$-theory one has that the weak Bloch-Kato property of the maximal pro-$p$ group $G_{\K}(p)$ of a field $\K$, containing a primitive $p$-th root of 1, implies that the algebra $\bfH^\bullet(G,\Z/p)$ is 1-generated (cf. \cite[Rem.~14.26]{dcf:lift}).


\section{Locally uniform pro-$p$ groups}
\label{sec:locunif}

We recall the following definition.

\begin{defin}\label{defin:unif}\rm
Let $G$ be a pro-$p$ group.
\begin{itemize}
 \item[(a)] $G$ is {\sl powerful} if $G'$ is contained in the subgroup of $G$ generated by $\{g^{p^\epsilon}\:\mid\:g\in G\}$, where $\epsilon=2$ if $p=2$, $\epsilon=1$ otherwise.
 \item[(b)] If $G$ is finitely generated, then $G$ is {\sl uniformly powerful} (or simply {\sl uniform}) if $G$ is powerful and torsion-free.
 \item[(c)] $G$ is {\sl locally uniform} if every finitely generated subgroup of $G$ is uniform.
\end{itemize} 
\end{defin}

(For a detailed account on powerful and uniform pro-$p$ groups and their properties we refer to \cite[Ch.~3--4]{ddsms}.)

By Lazard's work \cite{lazard:analytic}, if $G$ is a uniform pro-$p$ group one has an isomorphism of quadratic $\Z/p$-algebras
\begin{equation}\label{eq:cohom lazard}
 \bfLam_\bullet\left(H^1(G,\Z/p)\right)\overset{\sim}{\longrightarrow}\bfH^\bullet(G,\Z/p)
\end{equation}
(cf., e.g., \cite[Thm.~5.1.5]{sw:cohomology}).
Therefore, a finitely generated locally uniform pro-$p$ group is Bloch-Kato.
Moreover, for locally uniform pro-$p$ groups one has the following (cf. \cite[Thm.~A]{cq:bk} and \cite[Prop.~3.5]{cmq:fast}).

\begin{prop}\label{prop:locunif thetabelian}
A pro-$p$ group $G$ is locally uniform if, and only if, there exists a torsion-free orientation $\theta\colon G\to1+p\Z_p$ such that the oriented pro-$p$ group $\calG=(G,\theta)$ is $\theta$-abelian. 
\end{prop}

Consequently, a locally uniform pro-$p$ group may complete into a Kummerian oriented pro-$p$ group, as a $\theta$-abelian oriented pro-$p$ group is Kummerian by Theorem~\ref{thm:kummer}.
In fact, locally uniform pro-$p$ groups are the only uniform pro-$p$ groups which can do this. 

\begin{prop}\label{prop:uniform kummerian}
Let $G$ be a uniform pro-$p$ group.
Then $G$ may complete into a Kummerian oriented pro-$p$ group $\calG=(G,\theta)$ if, and only if, $G$ is locally uniform.\end{prop}

\begin{proof}
By Proposition~\ref{prop:locunif thetabelian}, it is enough to prove the following implication:
if $G$ may complete into a Kummerian oriented pro-$p$ group $\calG=(G,\theta)$, then $G$ is locally uniform.

If $\calG$ is Kummerian, then by Theorem~\ref{thm:kummer} the oriented pro-$p$ group $\calG/K(\calG)=(G/K(\calG),\bar\theta)$ is $\bar\theta$-abelian, and thus $G/K(\calG)$ is locally uniform by Proposition~\ref{prop:locunif thetabelian}.
So, both $G$ and $G/K(\calG)$ are uniform, and by \eqref{eq:cohom lazard} one has
\begin{equation}\label{eq:deg2 cohom unif}
 \begin{split}
   H^2(G,\Z/p)& \simeq \Lambda_2\left(H^1(G,\Z/p)\right),
 \\H^2(G/K(\calG),\Z/p)& \simeq \Lambda_2\left(H^1(G/K(\calG),\Z/p)\right).
  \end{split}
\end{equation}
On the other hand, the canonical projection $G\to G/K(\calG)$ induces maps
$$ \Inf_{G,K(\calG)}^n\colon H^n(G/K(\calG),\Z/p)\longrightarrow H^n(G,\Z/p)$$
for every $n\geq1$ such that 
$$\Inf_{G,K(\calG)}^1(\alpha)\cup\Inf_{G,K(\calG)}^1(\alpha')=\Inf_{G,K(\calG)}^2(\alpha\cup\alpha')$$
for every $\alpha,\alpha'\in H^1(G/K(\calG),\Z/p)$ (cf. \cite[Prop.~1.5.3]{nsw:cohn}).
Moreover, $\Inf_{G,K(\calG)}^1$ is an isomorphism, as $K(\calG)\subseteq \Phi(G)$ (cf. \cite[Ch.~I, \S~4.2, Remark]{serre:galc}).
Therefore, also $\Inf_{G,K(\calG)}^2$ is an isomorphism, and by the 5-term exact sequence in cohomology
\[ \begin{tikzpicture}[descr/.style={fill=white,inner sep=2pt}]
        \matrix (m) [
            matrix of math nodes,
            row sep=3.5em,
            column sep=3.6em,
            text height=1.5ex, text depth=0.25ex
        ]
        {  0 & H^1(G/K(\calG),\Z/p) & H^1(G,\Z/p) & H^1(K(\calG),\Z/p)^G \\
            & H^2(G/K(\calG),\Z/p) & H^2(G,\Z/p) & \\
           };

        \path[overlay,->, font=\scriptsize,>=latex]
        (m-1-1) edge  (m-1-2) 
        (m-1-2) edge node[auto] {$\Inf_{G,K(\calG)}^1$} (m-1-3) 
        (m-1-3) edge node[auto] {$\res^1_{G,K(\calG)}$} (m-1-4)
        (m-1-4) edge[out=355,in=175] node[descr,yshift=0.3ex] {$\mathrm{trg}$} (m-2-2)
        (m-2-2) edge node[auto] {$\Inf_{G,K(\calG)}^2$} (m-2-3);
\end{tikzpicture}\]
(cf. \cite[Prop.~1.6.7]{nsw:cohn})
one has $H^1(K(\calG),\Z/p)^G=0$.
Since $G$ is a pro-$p$ group and $H^1(K(\calG),\Z/p)$ is a $p$-elementary abelian group, this implies that $H^1(K(\calG),\Z/p)=0$, i.e., $K(\calG)$ is trivial, and $G\simeq G/K(\calG)$ is locally uniform.
\end{proof}

\begin{rem}\label{rem:locunif Gal}\rm
 It is well-known that a finitely generated locally uniform pro-$p$ group may be realized as the maximal pro-$p$ Galois group of a field (cf. \cite[Rem.~3.4]{efrat:small}).
 For example, let $\ell$ is a prime number, $\ell\neq p$, and for $k\geq1$ set $\F=\F_\ell(\xi)$, with $\xi\in(\bar\F_\ell)_s$ a root of 1 of order $p^k$.
Let $\K=\F_{\ell^n}(\!(X_1,\ldots,X_d)\!)$ be the field of Laurent series in the indeterminates $X_1,\ldots,X_d$, $d\geq1$, and with coefficients in $\F$.
Then $$\calG_{\K}=(G_{\K}(p),\theta_{\K,p})\simeq\Z_p^d\rtimes\calG_{\K}/\Ker(\theta_{\K,p}),$$ and $\Img(\theta_{\K,p})=1+p^k\Z_p$ (cf. \cite[Ex.~4.10]{cq:bk}).
\end{rem}


\section{$p$-adic analytic pro-$p$ groups}
\label{sec:analytic}

For a pro-$p$ group $G$ let $\dd(G)$ denote the minimal number of generators of $G$, i.e., 
$\dd(G)=\dim(G/\Phi(G))$, and let the {\sl rank} of $G$ be the supremum of all $\dd(H)$ with $H$ running through
all closed subgroups of $G$ (cf. \cite[\S~3.2]{ddsms}).
Then every finitely generated powerful pro-$p$ group has finite rank (cf. \cite[Thm.~3.13]{ddsms}).

The following result defines finitely generated {\sl $p$-adic analytic pro-$p$ groups}
(cf. \cite[Thm.~8.32 and Cor.~8.33]{ddsms}).

\begin{thm}\label{thm:analytic}
Let $G$ be a finitely generated pro-$p$ group. 
The following are equivalent:
\begin{itemize}
 \item[(i)] $G$ is a $p$-adic analytic manifold and the map $(x,y)\mapsto x\inv y$ is analytic;
 \item[(ii)] $G$ contains an open subgroup which is uniformly powerful;
 \item[(iii)] $G$ has finite rank.
\end{itemize}
A finitely generated pro-$p$ groups satisfying the above properties is a $p$-adic analytic pro-$p$ group.
\end{thm}

Hence, a subgroup of a finitely generated $p$-adic analytic pro-$p$ group has finite rank, and thus is $p$-adic analytic.
Moreover, if $N$ is a normal subgroup of a $p$-adic analytic pro-$p$ group $G$, then also $G/N$ has finite rank, and thus it is $p$-adic analytic (cf. \cite[Exercise~3.1]{ddsms}).

The {\sl dimension} $\dim(G)$ of a $p$-adic analytic pro-$p$ group $G$ is the minimal number of generators $\dd(U)$
of a uniform subgroup $U$ of $G$ (by \cite[Lemma~4.6]{ddsms} $\dim(G)$ does not depend on the choice of 
the uniform subgroup).
One has the following (cf. \cite[Thm.~4.8]{ddsms}).

\begin{prop}\label{prop:dim analytic quotient}
 Let $G$ be a $p$-adic analytic pro-$p$ group, and let $N\subseteq G$ be a normal subgroup of $G$.
 Then 
 \begin{equation}\label{eq:dim analytic quotient}
  \dim(G)=\dim(N)+\dim(G/N).
 \end{equation}
\end{prop}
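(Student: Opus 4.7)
The plan has three parts: show closed subgroups are analytic, show quotients are analytic, and then derive the additivity of dimension. The first two are soft; the third is where the real work lies.

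\textbf{Closed subgroups.} I would use the rank characterization recalled just before the statement. If $G$ has finite rank $r$, then every closed subgroup $H$ of $N\subseteq G$ is also a closed subgroup of $G$, so $\dd(H)\leq r$; hence $N$ has finite rank. Being a closed (hence finitely generated, by the rank bound) subgroup of a finitely generated $p$-adic analytic pro-$p$ group, $N$ is itself $p$-adic analytic.

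\textbf{Quotients.} Every closed subgroup of $G/N$ is of the form $H/N$ for some closed $H$ with $N\subseteq H\subseteq G$, and a topological generating set of $H$ projects to a topological generating set of $H/N$, so $\dd(H/N)\leq\dd(H)\leq\rank(G)$. Thus $G/N$ has finite rank. As the continuous image of the finitely generated $G$ it is finitely generated, hence $p$-adic analytic.

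\textbf{Dimension formula.} I would reduce everything to a uniform situation. Choose an open uniform subgroup $U\subseteq G$ and set $V=U\cap N$; the goal is to arrange that $V$ is uniform in $N$ and $U/V\isom UN/N$ is uniform in $G/N$. Once this is achieved we get $\dim(G)=\dd(U)$, $\dim(N)=\dd(V)$, $\dim(G/N)=\dd(U/V)$, and the additivity follows from the $\Z_p$-Lie algebra formalism for uniform pro-$p$ groups: to $U$ one associates a Lie algebra $L(U)$ which is a free $\Z_p$-module of rank $\dd(U)$, and a short exact sequence of uniform groups
\[
1\longrightarrow V\longrightarrow U\longrightarrow U/V\longrightarrow 1
\]
gives rise to a short exact sequence of free $\Z_p$-modules
\[
0\longrightarrow L(V)\longrightarrow L(U)\longrightarrow L(U/V)\longrightarrow 0,
\]
and $\Z_p$-ranks add.

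To secure the compatibility, I would start with any uniform open $U_0\subseteq G$ and replace it by a sufficiently deep term in its lower $p$-series, using the standard facts that (i) every sufficiently small open subgroup of a $p$-adic analytic pro-$p$ group is uniform, (ii) the subgroups $U_0^{p^k}$ of a uniform group are again uniform, and (iii) analyticity descends to closed subgroups and quotients (Steps~1--2). By iteratively shrinking $U_0$ one makes $U_0\cap N$ uniform inside a chosen uniform open subgroup of $N$, and simultaneously makes the image of $U_0$ in $G/N$ uniform inside a chosen uniform open subgroup of $G/N$.

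\textbf{Main obstacle.} The delicate point is exactly this simultaneous-uniformity step: intersections with $N$ and projections to $G/N$ need not interact cleanly with the lower $p$-series of $U$, so one has to use the full structure theory of analytic pro-$p$ groups (the equivalence between uniformity and being torsion-free powerful finitely generated, together with the closure of the uniform class under $p$-power subgroups and under passage to commensurable open subgroups). Once this compatibility is in place, the Lie-algebra additivity is a clean linear-algebra statement over $\Z_p$.
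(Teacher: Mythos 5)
The paper offers no proof of this proposition: it is imported wholesale from \cite{ddsms}*{Thm.~4.8} (together with the closure of the finite-rank class under closed subgroups and quotients), so there is no in-paper argument to compare your route against and your sketch must stand on its own. Your first two steps are fine. The gap is exactly where you flag it, and your proposed fix closes only half of it. Write $U=U_0^{p^k}$ with $U_0$ uniform open normal in $G$. The quotient side does work: $UN/N=(U_0N/N)^{p^k}$, and $U_0N/N\cong U_0/(U_0\cap N)$ is a finitely generated quotient of a powerful group, hence powerful, and high $p$-power subgroups of finitely generated powerful pro-$p$ groups are uniform. The intersection side does not: $U_0^{p^k}\cap N$ is torsion-free and finitely generated, but nothing forces it to be powerful, because intersecting with $N$ commutes with passing to $p^k$-power subgroups only when $U_0\cap N$ is isolated in $U_0$ --- which is the very property you are trying to manufacture. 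So the hypothesis ``$V$ uniform'' of your Lie-algebra exact sequence is left unverified, and further shrinking of $U$ along its lower $p$-series does not repair this.

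Two standard ways to finish. (i) Keep the above $U$, set $V=U\cap N$, and note that $V$ is isolated in $U$ because $U/V\cong UN/N$ is torsion-free; an isolated closed subgroup of a uniform group is automatically a $\Z_p$-submodule of $L(U)$ (it is closed under $x\mapsto x^{\lambda}$, and closed under the limit defining the addition precisely because it is isolated), so your rank count applies to the submodule $V$, which has torsion-free quotient, without $V$ having to be uniform as a group; one then checks $\rank_{\Z_p}V=\dim N$ using that $V$ is open in $N$. (ii) Alternatively, bypass Lie algebras: for $V$ isolated in the uniform $U$ one has $V\cap U^{p^k}=V^{p^k}$, whence
\[
p^{k\dim U}=\left|U:U^{p^k}\right|=\left|U/V:(U/V)^{p^k}\right|\cdot\left|V:V^{p^k}\right|,
\]
and the formula follows from $\dim H=\lim_k k^{-1}\log_p\left|H:H^{p^k}\right|$ for $H$ torsion-free of finite rank. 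Either way the ingredients are those of \cite{ddsms}*{Ch.~4}, which is where the paper sends the reader in any case.
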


\begin{exam}\rm
\begin{itemize}
 \item[(a)] A finitely generated abelian pro-$p$ group $G$ is $p$-adic analytic. In particular, if $G\simeq\Z_p^n\oplus A$, with $A$ a finite abelian $p$-group, then $\dim(G)=n$. 
 \item[(b)] If $G$ is a finitely generated locally powerful pro-$p$ group, then $G$ is $p$-adic analytic by Theorem~\ref{thm:analytic}, and $\dim(G)=\dd(G)$.
\end{itemize}
\end{exam}

\begin{exam}\label{exam:heisenberg}\rm
Let $p$ be a odd prime.
 The {\sl Heisenberg group over $\Z_p$} is the group $G$ of upper uni-triangular matrices over $\Z_p$, and it is a torsion-free $p$-adic analytic pro-$p$ group of dimension 3 (cf. \cite[Thm.~7.4--(2)]{GSK}).
In particular, $G$ has a presentation
\[
 G=\langle \:x,y,z\:\mid\:[x,y]=z,\:[x,z]=[y.z]=1\:\rangle,
\]
and one has $G/G'\simeq \Z_p^2$ and $G'=\langle z\rangle\simeq \Z_p$.
Thus, the oriented pro-$p$ group $\calG_{\mathbf{1}}=(G,\mathbf{1})$ is Kummerian by Remark~\ref{rem:abs torfree}.
Set $t=x^p$, and let $U$ be the subgroup of $G$ generated by $t,y,z$.
Then 
\[
 U=\left\langle\:t,y,z\:\mid\:[t,y]=z^p,\:[t,z]=[y,z]=1\:\right\rangle
\]
(cf. \cite[Ex.~7.2]{GSN}).
Hence, $U$ is uniform, and consequently $\dim(U)=\dd(U)=3$.
Yet, $U$ is not locally uniform, and therefore $U$ cannot complete into a Kummerian oriented pro-$p$ group by Proposition~\ref{prop:uniform kummerian}.
Altogether, $G$ cannot complete into a 1-smooth oriented pro-$p$ group.
\end{exam}

\begin{prop}\label{prop:analytic theta1}
Let $G$ be a finitely generated $p$-adic analytic pro-$p$ group, and suppose that the oriented pro-$p$ group $\calG=(G,\mathbf{1})$, with $\mathbf{1}\colon G\to1+p\Z_p$ the orientation constantly equal to 1, is 1-smooth.
Then $G$ is a free abelian pro-$p$ group.
\end{prop}

\begin{proof}
Since $G$ is $p$-adic analytic, every subgroup of $G$ is finitely generated by Theorem~\ref{thm:analytic}.
Thus, by Remark~\ref{rem:abs torfree} every subgorup of $G$ has torsion-free abelianization, i.e., $G$ is an {\sl absolutely torsion-free} pro-$p$ group (absolutely torsion free pro-$p$ groups were introduced by T.~W\"urfel in \cite{wurfel}).

Let $G^{(n)}$, $n\geq1$, denote the derived series of $G$, i.e., $G^{(1)}=G$ and $G^{(n+1)}=[G^{(n)},G^{(n)}]$.
Since $G$ is a finitely generated $p$-adic analytic pro-$p$ group, also the subgroups $G^{(n)}$ and the quotients $G^{(n)}/(G^{(n)})'=G^{(n)}/G^{(n+1)}$ are finitely generated $p$-adic analytic pro-$p$ groups.
Moreover, since $G$ is absolutely torsion-free, one has
\begin{equation}\label{eq:quot series}
G^{(n)}/G^{(n+1)}=G^{(n)}/(G^{(n)})'\simeq\Z_p^{\dd(G^{(n)})}\qquad \text{for all }n\geq1.
\end{equation}
Consequently, $\dim(G^{(n)}/G^{(n+1)})=\dd(G^{(n)})$.
From Proposition~\ref{prop:dim analytic quotient} and from \eqref{eq:quot series}, one deduces 
\begin{equation}
   \dim(G^{(n+1)}) = \dim(G^{(n)})-\dd(G^{(n)}).
   \end{equation}
Since $\dim(G)$ is finite, one has $\dim(G^{(n)})=0$ for some $n$.
Again by Proposition~\ref{prop:dim analytic quotient}, this implies that $\dim(G^{(n)}/(G^{(n)})')=\dd(G^{(n)})=0$, i.e., $G^{(n)}=\{1\}$.
This proves that $G$ is a solvable pro-$p$ group.
By \cite[Prop.~2]{wurfel}, an absolutely torsion-free solvable pro-$p$ group is a free abelian pro-$p$ group, and this concludes the proof.
\end{proof}

\begin{prop}\label{prop:abel ker}
 Let $\calG=(G,\theta)$ be a 1-smooth oriented pro-$p$ group with $\theta$ a torsion-free orientation.
 If $\Ker(\theta)$ is abelian, then $\calG$ is $\theta$-abelian. 
\end{prop}

\begin{proof}
 If the orientation $\theta$ is constantly equal to 1, then $\Ker(\theta)=G$.
 Thus, by Remark~\ref{rem:abs torfree} $G=G/G'$ is a free abelian pro-$p$ group, so that $\calG$ is $\theta$-abelian.
 
 Suppose now that $\theta\not\equiv\mathbf{1}$.
We assume first that $p\neq2$.
 Pick two arbitrary elements $x,y\in G$ such that $\theta(x)\neq 1$ and $y\in \Ker(\theta)$, and put $z=[x,y]$ and $t=y^p$.
Clearly, $z,t\in\Ker(\theta)$.
Since $z,y\in\Ker(\theta)$, which is abelian by hypothesis, one has $z^y=z$, and hence commutator calculus yields
\begin{equation}\label{eq:1proof}
 [x,t]=[x,y^p]=z\cdot z^y\cdots z^{y^{p-1}}=z^p.
\end{equation}
Let $H$ be the subgroup of $G$ generated by $x,y$, and let $U$ be the subgroup of $H$ generated by $x,t,z$.
Then the oriented pro-$p$ groups $\calG_H=(H,\theta\vert_H)$ and $\calG_U=(U,\theta\vert_U)$ are 1-smooth.

Put $\lambda=1-\theta(x)^{-1}$.
Then $0\neq\lambda\in p\Z_p$, as $1\neq\theta(x)^{-1}\in 1+p\Z_p$.
By definition, $[x,t]\cdot t^{-\lambda}\in K(\calG_U)$.
Since $t$ and $z$ commute, from \eqref{eq:1proof} one deduces 
\begin{equation}\label{eq:2proof}
\left(zt^{-\lambda/p}\right)^p=z^pt^{-\frac{\lambda}{p}p}=z^pt^{-\lambda}= [x,t]t^{-\lambda}
 \in K(\calG_U).
\end{equation}
Moreover, $zt^{-\lambda/p}\in\Ker(\theta\vert_U)$.
Since $\calG_U$ is 1-smooth (and thus Kummerian), by Theorem~\ref{thm:kummer} the quotient $\Ker(\theta\vert_U)/K(\calG_U)$ is a free abelian pro-$p$ group, and therefore \eqref{eq:2proof} implies that also $zt^{-\lambda/p}$ is an element of $K(\calG_U)$.

Since $K(\calG_U)\subseteq \Phi(U)$, one has $z\equiv t^{\lambda/p}\bmod\Phi(U)$.
Then by \cite[Prop.~1.9]{ddsms}, $U$ is generated by $x$ and $t$.
Since $[x,t]\in U^p$ by \eqref{eq:1proof}, the pro-$p$ group $U$ is powerful --- and hence uniformly powerful, as it is torsion-free (cf. Example~\ref{ex:AS}).
Therefore, $\calG_U$ is $\theta_U$-abelian by Proposition~\ref{prop:uniform kummerian}.
In particular, $K(\calG_U)=\{1\}$ by \eqref{eq:G mod KG} and Theorem~\ref{thm:kummer}, and thus
\begin{equation}\label{eq:boh}
 [x,y]=z=t^{\lambda/p}=y^{1-\theta(x)^{-1}}.
\end{equation}
Since $\Ker(\theta)$ is abelian by hypothesis, and since $x\in G\smallsetminus\Ker(\theta)$ and $y\in\Ker(\theta)$ were arbitrarily chosen, \eqref{eq:boh} implies that
$\calG\simeq\Ker(\theta)\rtimes \calG/\Ker(\theta)$ in the sense of \eqref{eq:or semidirect}.
Since $\Ker(\theta)$ is torsion-free (cf. Example~\ref{ex:AS}), $\calG$ is $\theta$-abelian.

Finally, assume that $\theta\not\equiv\mathbf{1}$ and $p=2$. 
Since $\calG$ is torsion-free, $\Img(\theta)\subseteq1+4\Z_2$, and the above argument works verbatim if one replaces $p$ with 4: indeed, one has $0\neq\lambda\in 4\Z_2$, as $1\neq\theta(x)^{-1}\in1+4\Z_2$, and $[x,t]\in U^4$, so that the pro-$2$ group $U$ is powerful also in this case.
Hence, $\calG$ is a $\theta$-abelian oriented pro-2 group.
\end{proof}

\begin{thm}\label{thm:analytic 1smooth}
 Let $\calG=(G,\theta)$ be an oriented pro-$p$ group with $G$ a finitely generated $p$-adic analytic pro-$p$ group and $\theta$ a torsion-free orientation.
If $\calG$ is 1-smooth, then it is $\theta$-abelian. 
\end{thm}

\begin{proof}
Since $G$ is $p$-adic analytic, also $\Ker(\theta)$ is $p$-adic analytic.
Since the oriented pro-$p$ group $\calG_{\Ker(\theta)}=(\Ker(\theta),\mathbf{1})$ is 1-smooth, Proposition~\ref{prop:analytic theta1} implies that $\Ker(\theta)$ is a free abelian pro-$p$ group.
Thus, Proposition~\ref{prop:abel ker} implies the claim.
\end{proof}

Let $p=2$, and let $G$ be a pro-2 group.
Also, let $\Z/4$ be a trivial $G$-module.
The short exact sequence of trivial $G$-modules
\[
 \xymatrix{ 0\ar[r]  & \Z/2\ar[r]^-{2\cdot} & \Z/4\ar[r]^-{\pi} &\Z/2\ar[r] &  0 }
\]
induces an exact sequence in cohomology
\begin{equation}\label{eq:les Bockstein}
 \begin{tikzpicture}[descr/.style={fill=white,inner sep=2pt}]
        \matrix (m) [
            matrix of math nodes,
            row sep=3em,
            column sep=3em,
            text height=1.5ex, text depth=0.25ex
        ]
        {   & H^1(G,\Z/2) & H^1(G,\Z/4) & H^1(G,\Z/2) \\
            & H^2(G,\Z/2) & H^2(G,\Z/4) &\cdots  \\
           };

        \path[overlay,->, font=\scriptsize,>=latex]
        (m-1-2) edge node[auto] {} (m-1-3) 
        (m-1-3) edge node[auto] {$\pi^\ast$} (m-1-4)
        (m-1-4) edge[out=355,in=175] node[descr,yshift=0.3ex] {$\mathfrak{b}$} (m-2-2)
        (m-2-2) edge node[auto] {} (m-2-3)
        (m-2-3) edge node[auto] {} (m-2-4);
\end{tikzpicture}
\end{equation}
and the connecting homomorphism $\mathfrak{b}$ is called the {\sl Bockstein morphism}.
Clearly, the map $\mathfrak{b}$ is trivial if, and only if, the map $\pi^\ast\colon H^1(G,\Z/4)\to H^1(G,\Z/2)$ is surjective.
Moreover, the map $\mathfrak{b}$ is trivial if, and only if $\alpha^2=0$ for every $\alpha\in H^1(G,\Z/2)$ (cf. \cite[Lemma~2.4]{em}).

\begin{rem}\label{rem:torsion Bockstein}\rm
 Set $p=2$.
\begin{itemize}
 \item[(i)] Let $\K$ be a field containing $\sqrt{-1}$. 
Then $\Img(\theta_{\K,2})\subseteq1+4\Z_2$, and $\Z_2(\theta_{\K,2})/4$ is isomorphic to $\Z/4$ as a (trivial) $G_{\K}(2)$-module.
Since the oriented pro-$2$ group $\calG_{\K}=(G_{\K}(2),\theta_{\K,2})$ is Kummerian, the map 
$$\pi^\ast\colon H^1(G_{\K}(2),\Z/4)\longrightarrow H^1(G_{\K}(2),\Z/2)$$ is surjective, and thus $\mathfrak{b}$ is trivial.
 \item[(ii)] Let $G$ be a pro-2 group.
If $\bfH^\bullet(G,\Z/2)$ is a quadratic $\Z/2$-algebra and the Bockstein morphism $\mathfrak{b}$ is trivial, then by Remark~\ref{rem:wedgecomm} one has an epimorphism of quadratic $\Z/2$-algebras 
$$\xymatrix{\bfLam_\bullet\left(H^1(G,\Z/2)\right)\ar@{->>}[r] & \bfH^\bullet(G,\Z/2)}.$$
Hence, $\mathrm{cd}(G)\leq \dim(H^1(G,\Z/2))$ (here $\mathrm{cd}(G)$ denotes the cohomological dimension, cf. \cite[Def.~3.3.1]{nsw:cohn} ).
Consequently, $G$ is torsion-free, as a pro-$p$ group with non-trivial torsion has infinite cohomological dimension.
\end{itemize} 
\end{rem}

\begin{cor}\label{cor:analytic}
Let $G$ be a finitely generated $p$-adic analytic pro-$p$ group.
The following are equivalent.
\begin{itemize}
 \item[(i)] $G$ may be completed into a 1-smooth oriented pro-$p$ group $\calG=(G,\theta)$ with $\theta$ a torsion-free orientation.
 \item[(ii)] $G$ is a Bloch-Kato pro-$p$ group, and the Bockstein morphism $\mathfrak{b}$ is trivial if $p=2$.
 \item[(iii)] $G$ occurs as the maximal pro-$p$ Galois group of a field $\K$ containing a primitive $p$-th root of 1 (and also $\sqrt{-1}$ if $p=2$). 
\end{itemize}
\end{cor}

\begin{proof}
Let $G$ be a finitely generated $p$-adic analytic pro-$p$ group.
First, we show that each of the three conditions implies that $G$ may be completed into a $\theta$-abelian oriented pro-$p$ group $\calG=(G,\theta)$ with $\theta$ a torsion-free orientation.
Then, we show that if $\calG=(G,\theta)$ is a $\theta$-abelian oriented pro-$p$ group with $\theta$ torsion-free, then all three conditions (i), (ii), (iii) hold.

If $G$ may be completed into a 1-smooth oriented pro-$p$ group $\calG=(G,\theta)$ with $\theta$ a torsion-free orientation, then $\calG$ is $\theta$-abelian by Theorem~\ref{thm:analytic 1smooth}.
On the other hand, if $G$ is a Bloch-Kato pro-$p$ group (satisfying the further condition $\mathfrak{b}\equiv\mathbf{0}$ if $p=2$), then $G$ may be completed into a $\theta$-abelian oriented pro-$p$ group $\calG=(G,\theta)$ by \cite[Thm.~4.6]{cq:bk} if $p\neq2$ and by \cite[Thm.~4.11]{cq:bk} if $p=2$ (note that in this case $G$ is torsion-free by Remark~\ref{rem:torsion Bockstein}--(ii)).
Moreover, if $G\simeq G_{\K}(p)$ for some field containing a primitive $p$-th root of 1 (and $\sqrt{-1}$ if $p=2$) then $G$ is a Bloch-Kato pro-$p$ group by the Norm Residue Theorem (and by Remark~\ref{rem:torsion Bockstein}--(i) $\mathfrak{b}\equiv\mathbf{0}$ if $p=2$), so that (iii) implies (ii).

Conversely, if $\calG=(G,\theta)$ is a $\theta$-abelian oriented pro-$p$ group with $\theta$ torsion-free, then $G$ is a finitely generated locally uniform pro-$p$ group by Proposition~\ref{prop:locunif thetabelian}.
Therefore: (i) for every subgroup $H$ of $G$, the oriented pro-$p$ group $\calG_H=(H,\theta\vert_H)$ is Kummerian by Theorem~\ref{thm:kummer}, and thus $\calG$ is 1-smooth; 
(ii) $G$ is a Bloch-Kato pro-$p$ group by \eqref{eq:cohom lazard} --- and moreover $\mathfrak{b}\equiv\mathbf{0}$ as $H^2(G,\Z/2)\simeq\Lambda_2(H^1(G,\Z/2))$, if $p=2$; (iii) $G$ occurs as the maximal pro-$p$ Galois group of a field containing a primitive $p$-th root of 1 by Remark~\ref{rem:locunif Gal}.
\end{proof}

Corollary~\ref{cor:analytic} implies Theorem~\ref{thm:intro}.
As mentioned in the Introduction, this result is particularly relevant because
$p$-adic analytic Bloch-Kato pro-$p$ groups are the ``upper bound'' of the class of Bloch-Kato pro-$p$ groups, in the following sense:
if a finitely generated (non-trivial) pro-$p$ group $G$ is Bloch-Kato,
then by \cite[Prop.~4.1]{cq:bk} for the cohomological dimension $\mathrm{cd}(G)$
and the number of defining relations $\mathrm{r}(G)$ --- the latter being equal to $\dim H^2(G,\Z/p)$
(cf. \cite[Ch.~I, \S~4.3]{serre:galc}) --- one has bounds
\[   1\leq \mathrm{cd}(G)\leq \dd(G)\qquad\text{and}\qquad 0\leq\mathrm{r}(G)\leq\binom{\dd(G)}{2}.\]
The lower bounds occur if $G$ is a free pro-$p$ group (and thus $G$ is 1-smooth, cf. Example~\ref{ex:kummer}--(a)).
The upper bounds occur when $G$ is $p$-adic analytic.
In particular, if $G$ is a finitely generated Bloch-Kato pro-$p$ group
(satisfying $\mathfrak{b}\equiv\mathbf{0}$, if $p=2$), the following three conditions are equivalent: (i) $\mathrm{cd}(G)=\dd(G)$; (ii) $\mathrm{r}(G)=\binom{\dd(G)}{2}$; (iii) $G$ is $p$-adic analytic (cf. \cite[Cor.~4.8]{cq:bk}).

We conclude with the following remark, which states two open questions on $1$-smooth oriented pro-$p$ groups.

\begin{rem}\rm
\begin{itemize}
 \item[(i)] Bloch-Kato pro-$p$ groups satisfy the following {\sl Tits' alternative}: if a Bloch-Kato pro-$p$ group $G$ is not locally uniform, then it contains a non-abelian free subgroup (cf. \cite[Thm.~B]{cq:bk}).
 In \cite{cq:galfeat}, we conjecture that 1-smooth oriented pro-$p$ groups satisfy the same alternative: namely, if a 1-smooth oriented pro-$p$ group $\calG=(G,\theta)$ is not $\theta$-abelian, then $G$ contains a non-abelian free subgroup.
 \item[(ii)] Torsion-free $p$-adic analytic pro-$p$ groups $G$ are {\sl Poincar\'e duality pro-$p$ groups} of cohomological dimension
$\mathrm{cd}(G)=\dim(G)$  (cf. \cite[\S~5]{sw:cohomology}).
On the opposite side there are Poincar\'e duality pro-$p$ groups of cohomological dimension $\mathrm{cd}(G)=2$,
namely, infinite Demushkin groups, which are both 1-smooth and Bloch-Kato by Examples~\ref{ex:kummer}--(b) and \ref{ex:BK}--(b).
This raises the following sub-question of Conjecture~\ref{conj:weak bk}:
are 1-smooth Poincar\'e duality pro-$p$ groups (weakly) Bloch-Kato?
\end{itemize}

\end{rem}

\medskip 
{\small
\begin{acknowledgment} \rm
The author is deeply indebted with: N.~D.~T\^an, who pointed out to the author the possible importance
of \cite[Prop.~6]{labute:demushkin}, some years ago; P.~Guillot, for the inspiring discussions on the paper \cite{dcf:lift}; 
I.~Efrat and Th.~Weigel, for working with the author on the papers \cite{eq:kummer} and \cite{qw:cyc}
respectively; G.~Chinello, for his helpful comments; and the anonymous referee and the managing editor, for their careful work with this paper.
Moreover, the author wishes to thank also the two anonymous referees who dealt with the original version of the manuscript \cite{cq:1smooth} submitted to another journal (see Remark~\ref{rem:intro}), as their (sometimes diverging) comments contributed to the improvement of this paper.

This paper was inslpired also by the discussions during the workshop ``Nilpotent Fundamental Groups'' which took place at the
Banff International Research Station (Canada) in June 2017,
(see \cite{birs}*{\S~3.1.6, 3.2.6}), so the author is gratefully indebted with the organizers and the participants of the workshop.

\end{acknowledgment}}


\begin{bibdiv}
\begin{biblist}

\bib{becker}{article}{
   author={Becker, E.},
   title={Euklidische K\"{o}rper und euklidische H\"{u}llen von K\"{o}rpern},
   language={German},
   note={Collection of articles dedicated to Helmut Hasse on his
   seventy-fifth birthday, II},
   journal={J. Reine Angew. Math.},
   volume={268/269},
   date={1974},
   pages={41--52},
   issn={0075-4102},
}

\bib{BQW:raags}{unpublished}{
   author={Blumer, S.},
   author={Quadrelli, C.},
   author={Weigel, Th.S.}
   title={Oriented right-angled Artin pro-$p$ groups and absolute Galois groups},
   date={2021},
   note={In preparation},
}

\bib{langlands}{article}{
   author={Breuil, C.},
   title={The emerging $p$-adic Langlands programme},
   conference={
      title={Proceedings of the International Congress of Mathematicians.
      Volume II},
   },
   book={
      publisher={Hindustan Book Agency, New Delhi},
   },
   date={2010},
   pages={203--230},
   }
   
\bib{cem}{article}{
   author={Chebolu, S.K.},
   author={Efrat, I.},
   author={Minac, J.},
   title={Quotients of absolute Galois groups which determine the entire
   Galois cohomology},
   journal={Math. Ann.},
   volume={352},
   date={2012},
   number={1},
   pages={205--221},
   issn={0025-5831},
}
   
\bib{cmq:fast}{article}{
   author={Chebolu, S.K.},
   author={Monac, J.},
   author={Quadrelli, C.},
   title={Detecting fast solvability of equations via small powerful Galois groups},
   journal={Trans. Amer. Math. Soc.},
   volume={367},
   date={2015},
   number={12},
   pages={8439--8464},
}

\bib{dcf:lift}{unpublished}{
   author={De Clercq, C.},
   author={Florence, M.},
   title={Lifting theorems and smooth profinite groups},
   date={2017},
   note={Available at {\tt arXiv:1711.06585}},
}

\bib{ddsms}{book}{
   author={Dixon, J.D.},
   author={du Sautoy, M.P.F.},
   author={Mann, A.},
   author={Segal, D.},
   title={Analytic pro-$p$ groups},
   series={Cambridge Studies in Advanced Mathematics},
   volume={61},
   edition={2},
   publisher={Cambridge University Press, Cambridge},
   date={1999},
   pages={xviii+368},
   isbn={0-521-65011-9},
}

\bib{efrat:small}{article}{
   author={Efrat, I.},
   title={Small maximal pro-$p$ Galois groups},
   journal={Manuscripta Math.},
   volume={95},
   date={1998},
   number={2},
   pages={237--249},
   issn={0025-2611},
}

\bib{em}{article}{
   author={Efrat, I.},
   author={Monac, J.},
   title={On the descending central sequence of absolute Galois groups},
   journal={Amer. J. Math.},
   volume={133},
   date={2011},
   number={6},
   pages={1503--1532},
   issn={0002-9327},
}

\bib{eq:kummer}{article}{
   author={Efrat, I.},
   author={Quadrelli, C.},
   title={The Kummerian property and maximal pro-$p$ Galois groups},
   journal={J. Algebra},
   volume={525},
   date={2019},
   pages={284--310},
   issn={0021-8693},
}

\bib{GSK}{article}{
   author={Gonz\'{a}lez-S\'{a}nchez, J.},
   author={Klopsch, B.},
   title={Analytic pro-$p$ groups of small dimensions},
   journal={J. Group Theory},
   volume={12},
   date={2009},
   number={5},
   pages={711--734},
   issn={1433-5883},
}
	
\bib{GSN}{article}{
   author={Gonz\'{a}lez-S\'{a}nchez, J.},
   author={Nicolas, A.P.},
   title={Uniform groups and Lie algebras},
   journal={J. Algebra},
   volume={334},
   date={2011},
   pages={54--73},
   issn={0021-8693},
}
	
	\bib{HW:book}{book}{
   author={Haesemeyer, C.},
   author={Weibel, Ch.},
   title={The norm residue theorem in motivic cohomology},
   series={Annals of Mathematics Studies},
   volume={200},
   publisher={Princeton University Press, Princeton, NJ},
   date={2019},
}

\bib{labute:demushkin}{article}{
   author={Labute, J.P.},
   title={Classification of Demushkin groups},
   journal={Canad. J. Math.},
   volume={19},
   date={1967},
   pages={106--132},
   issn={0008-414X},
}

\bib{lazard:analytic}{article}{
   author={Lazard, M.},
   title={Groupes analytiques $p$-adiques},
   language={French},
   journal={Inst. Hautes \'{E}tudes Sci. Publ. Math.},
   number={26},
   date={1965},
   pages={389--603},
   issn={0073-8301},
}

\bib{birs}{report}{
   author={Minac, J.},
   author={Pop, F.},
   author={Topaz, A.},
   author={Wickelgren, K.},
   title={Nilpotent Fundamental Groups},
   date={2017},
   note={Report of the workshop ``Nilpotent Fundamental Groups'', Banff AB, Canada, June 2017},
   eprint={https://www.birs.ca/workshops/2017/17w5112/report17w5112.pdf},
   organization={BIRS for Mathematical Innovation and Discovery},
   conference={
      title={Nilpotent Fundamental Groups 17w5112},
      address={Banff AB, Canada},
      date={June 2017}},
}

\bib{nsw:cohn}{book}{
   author={Neukirch, J.},
   author={Schmidt, A.},
   author={Wingberg, K.},
   title={Cohomology of number fields},
   series={Grundlehren der Mathematischen Wissenschaften [Fundamental
   Principles of Mathematical Sciences]},
   volume={323},
   edition={2},
   publisher={Springer-Verlag, Berlin},
   date={2008},
   pages={xvi+825},
   isbn={978-3-540-37888-4},}
  
\bib{pp:quad}{book}{
   author={Polishchuk, A.},
   author={Positselski, L.},
   title={Quadratic algebras},
   series={University Lecture Series},
   volume={37},
   publisher={American Mathematical Society, Providence, RI},
   date={2005},
   pages={xii+159},
   isbn={0-8218-3834-2},
}
   
 
\bib{cq:bk}{article}{
   author={Quadrelli, C.},
   title={Bloch-Kato pro-$p$ groups and locally powerful groups},
   journal={Forum Math.},
   volume={26},
   date={2014},
   number={3},
   pages={793--814},
   issn={0933-7741},
}

\bib{cq:1smooth}{unpublished}{
   author={Quadrelli, C.},
   title={1-smooth pro-$p$ groups and the Bloch-Kato conjecture},
   date={2019},
   note={Preprint (earlier version of this article), available at {\tt arXiv:1904.00667v1}},
}

\bib{cq:galfeat}{article}{
   author={Quadrelli, C.},
   title={Galois-theoretic features for 1-smooth pro-$p$ groups},
   date={2021},
   journal={Canad. Math. Bull.},
   doi={10.4153/S0008439521000461},
   }

\bib{cq:nogal}{article}{
   author={Quadrelli, C.},
   title={Two families of pro-$p$ groups that are not absolute Galois groups},
   date={2021},
   journal={J. Group Theory},
   doi={10.1515/jgth-2020-0186},
}

\bib{cq:detect1cyc}{unpublished}{
   author={Quadrelli, C.},
   title={Chasing maximal pro-$p$ Galois groups with 1-cyclotomicity},
   date={2021},
   note={Preprint, available at {\tt arXiv:2106.00335}},
}

\bib{qw:cyc}{article}{
   author={Quadrelli, C.},
   author={Weigel, Th.S.},
   title={Profinite groups with a cyclotomic $p$-orientation},
   date={2020},
   volume={25},
   journal={Doc. Math.},
   pages={1881--1916}
   }

\bib{rost}{article}{
   author={Rost, M.},
   title={Norm varieties and algebraic cobordism},
   conference={
      title={Proceedings of the International Congress of Mathematicians,
      Vol. II},
      address={Beijing},
      date={2002},
   },
   book={
      publisher={Higher Ed. Press, Beijing},
   },
   date={2002},
   pages={77--85},
   review={\MR{1957022}},
}

\bib{serre:galc}{book}{
   author={Serre, J.-P.},
   title={Galois cohomology},
   series={Springer Monographs in Mathematics},
   edition={Corrected reprint of the 1997 English edition},
   note={Translated from the French by Patrick Ion and revised by the
   author},
   publisher={Springer-Verlag, Berlin},
   date={2002},
   pages={x+210},
   isbn={3-540-42192-0},}

\bib{st:fratini}{unpublished}{
   author={Snopce, I.},
   author={Tanushevski, S.},
   title={Frattini-injectivity and maximal pro-p Galois groups},
   date={2020},
   note={Preprint, available at {\tt arXiv:2009.09297}},
}

\bib{sz:raags}{unpublished}{
   author={Snopce, I.},
   author={Zalesskii, P.A.},
   title={Right-angled Artin pro-$p$ groups},
   date={2020},
   note={Preprint, available at {\tt arXiv:2005.01685}},
}

\bib{sw:cohomology}{article}{
   author={Symonds, P.},
   author={Weigel, Th.S.},
   title={Cohomology of $p$-adic analytic groups},
   conference={
      title={New horizons in pro-$p$ groups},
   },
   book={
      series={Progr. Math.},
      volume={184},
      publisher={Birkh\"{a}user Boston, Boston, MA},
   },
   date={2000},
   pages={349--410},,
}


\bib{voev}{article}{
   author={Voevodsky, V.},
   title={On motivic cohomology with $\bold Z/l$-coefficients},
   journal={Ann. of Math. (2)},
   volume={174},
   date={2011},
   number={1},
   pages={401--438},
   issn={0003-486X},
   }
   
\bib{weibel}{article}{
   author={Weibel, Ch.},
   title={2007 Trieste lectures on the proof of the Bloch-Kato conjecture},
   conference={
      title={Some recent developments in algebraic $K$-theory},
   },
   book={
      series={ICTP Lect. Notes},
      volume={23},
      publisher={Abdus Salam Int. Cent. Theoret. Phys., Trieste},
   },
   date={2008},
   pages={277--305},
}
\bib{weibel2}{article}{
   author={Weibel, Ch.},
   title={The norm residue isomorphism theorem},
   journal={J. Topol.},
   volume={2},
   date={2009},
   number={2},
   pages={346--372},
   issn={1753-8416},
}

\bib{wurfel}{article}{
   author={W\"{u}rfel, T.},
   title={On a class of pro-$p$ groups occurring in Galois theory},
   journal={J. Pure Appl. Algebra},
   volume={36},
   date={1985},
   number={1},
   pages={95--103},
}

\end{biblist}
\end{bibdiv}
\end{document}